\newtheorem{theorem}{Theorem}[section]
\newtheorem{corollary}{Corollary}[section]
\newtheorem{remark}{Remark}[section]
\begin{document}
\title{Lower Bounds of the Discretization for Piecewise Polynomials
\footnote{This project was supported by the National Natural Science
Foundation of China (11001259, 11031006, 2011CB309703).}
}
\author{Qun Lin\footnote{LSEC, Institute of Computational Mathematics,
 Academy of Mathematics and Systems Science, Chinese Academy of Sciences, Beijing 100190,
China (linq@lsec.cc.ac.cn).}\ ,\  \   Hehu Xie\footnote{LSEC,
Institute of Computational Mathematics, Academy of Mathematics and
Systems Science, Chinese Academy of Sciences, Beijing 100190,
China (hhxie@lsec.cc.ac.cn).}\ \ \ and \ \
Jinchao Xu\footnote{Center for Computational Mathematics and
  Applications and Department of Mathematics,
Pennsylvania State University, University Park, PA 16802, USA
(xu@math.psu.edu)
The work of this author was partially supported by US National Science
Foundation through DMS 0915153 and DMS 0749202}}
\date{}
\maketitle

\begin{abstract}
  Assume that $V_h$ is a space of piecewise polynomials of degree less
  than $r\geq 1$ on a family of quasi-uniform triangulation of size
  $h$.  Then the following well-known upper bound holds for a
  sufficiently smooth function $u$ and $p\in [1, \infty]$
$$
\inf_{v_h\in V_h}\|u-v_h\|_{j,p,\Omega,h} \le C h^{r-j}
|u|_{r,p,\Omega},\quad 0\le j\le r.
$$
In this paper, we prove that, roughly speaking, if $u\not\in V_h$, the
above estimate is sharp. Namely,
$$
    \inf_{v_h\in V_h}\|u-v_h\|_{j,p,\Omega,h} \ge c h^{r-j},\quad 0\le j\le r, \ \ 1\leq p\leq \infty,
$$
for some $c>0$.

The above result is further extended to various situations including
more general Sobolev space norms, general shape regular grids and many
different types of finite element spaces.  As an application, the
sharpness of finite element approximation of elliptic problems and the
corresponding eigenvalue problems is established.

{\bf Keywords.} Lower bound, error estimate, finite element method, elliptic problem, eigenpair problem.

{\bf Subject Classification:} 65N30, 41A10, 65N15, 65N25, 35J55
\end{abstract}
\section{Introduction}
Error analyses for many numerical methods are mostly presented for
upper bound estimates of the approximation error.  This paper is
devoted to lower bound error estimate and its applications for
piecewise polynomial approximation in Sobolev spaces.  Our work was
inspired by some recent studies of lower end approximation of
eigenvalues by finite element discretization for some elliptic partial
differential operators \cite{HuHuangLin,LinXieLuoLi}.  One crucial
technial ingredient that is needed in the analysis in
\cite{HuHuangLin,LinXieLuoLi} is some lower bound of the eigenfunction
discretization error by the finite element method.

Lower bound error estimates have been studied in the literature for
some special cases.  In Babu\v{s}ka and Miller
\cite{BabuskaMiller,BabuskaStroulis}, lower bounds of the
discretization error were obtained for second order elliptic problem
by a bilinear element discretization by the Taylor expansion method
under the assumption that the solution is smooth enough on the
prescribed domain.  More recently in K\v{r}\'{i}\v{z}ek, Roos, and
Chen in \cite{KrizekRoosChen} (which partially inspired the work in
this paper), two-sided bounds were obtained for the discretization
error of linear and bilinear elements on the uniform meshes by
superconvergence theory and interpolation error estimate.

The aim of this paper is to derive lower bound results of the error
by piecewise polynomial approximation for much more general classes
of problems under much weaker and more natural assumptions on grids
and smoothness of functions to be approximated.  As a special
application, lower bounds of the discretization error by a variety
of finite element spaces can be easily obtained.  For example, the
following lower error bounds (see Sections 3 and 4) are valid for
finite element (consisting of piecewise polynomials of degree less
than $r$) approximation to $2m$-th order elliptic boundary value
problems:
\begin{equation*}
\|u-u_h\|_{j,p,h} \geq  C h^{r-j},\ \ \ \ 0\leq j\leq r,
\end{equation*}
where the positive constant $C$ is independent of the mesh size $h$.
This kind of results plays a very important in the analysis of lower
end eigenvalue approximations in \cite{HuHuangLin,LinXieLuoLi}.

The outline of the rest of the paper is as follows. Section 2 is
devoted to general derivation of lower bounds of the error by
piecewise polynomial approximation.  Section 3 is for lower bounds of
the discretization error of the second order elliptic problem and the
corresponding eigenpair problem by finite element method.  Section
concerns with a generalization of the results from Section 3 to the $2m$-th
order elliptic problem and the corresponding eigenpair problem.
Section 5 contains some brief concluding remarks.


\section{Notation and basic results}
In this section, we first introduce the used notation and then state some lower bound results of the
piecewise polynomial approximation error which is a basic tool in this paper.

Here we assume that $\Omega\subset \mathcal{R}^n$ ($n\geq 1$) is a bounded polytopic domain with Lipschitz
continuous boundary $\partial\Omega$.  Throughout this paper, we use the standard notation for the usual Sobolev
spaces and the corresponding norms, semi-norms, and inner products as in \cite{BrennerScott, Ciarlet}.
Let us introduce the multi-index notation. A multi-index $\alpha$ is an $n$-tuple of non-negative
integers $\alpha_i$. The length of $\alpha$ is given by $$|\alpha|=\sum_{i=1}^n\alpha_i.$$
The derivative $D^{\alpha}v$ is then defined by
$$D^{\alpha}v=\Big(\frac{\partial}{\partial x_1}\Big)^{\alpha_1}\cdots \Big(\frac{\partial}{\partial x_n}\Big)^{\alpha_n}v.$$
For a subdomain $G$ of $\Omega$, the usual Sobolev spaces $W^{m,p}(G)$  with norm $\|\cdot\|_{m,p,G}$
and semi-norm $|\cdot|_{m,p,G}$ are used. In the case $p=2$, we have $H^m(G)=W^{m,2}(G)$ and the index
$p$ will be omitted. The $L^2$-inner product on $G$ is denoted by $(\cdot,\cdot)_G$. For $G\subset \Omega$
 we write $G\subset\subset \Omega$ to indicate that ${\rm dist}(\partial\Omega, G)>0$ and ${\rm meas}(G)>0$.

We introduce a face-to-face partition $\mathcal{T}_h$ of the
computational domain $\Omega$ into elements $K$ (triangles, rectangles,
tetrahedrons, bricks, etc.) such that
$$\bar{\Omega}=\bigcup_{K\in\mathcal{T}_h}K$$
and let $\mathcal{E}_h$ denote  a set of all $(n-1)$-dimensional facets of all elements $K\in\mathcal{T}_h$.
Here $h:=\max\limits_{K\in\mathcal{T}_h} h_K$ and $h_K=\text{diam}\ K$
denote the global and local mesh size, respectively \cite{BrennerScott,Ciarlet}.
We also define $\mathcal{T}_h^G=\big\{K\in \mathcal{T}_h\ {\rm and}\ K\subset G\big\}$ and
$h_G=\max\limits_{K\in \mathcal{T}_h^G} h_K$.
A family of partitions $\mathcal{T}_h$ is said to be {\it regular} if it satisfies the following condition:
$$\exists \sigma>0 \makebox{ such that} \ \ {h_K}/{\tau_K}>\sigma \ \ \ \forall K\in \mathcal{T}_h, $$
where $\tau_K$ is maximum diameter of the inscribed ball in $K\in\mathcal{T}_h$. A regular family of
partitions $\mathcal{T}_h$ is called {\it quasi-uniform} if it satisfies
$$\exists \beta>0\ \ \makebox{
such that}\ \ \max\{{h}/{h_K}, \ K\in \mathcal{T}_h\}\leq \beta.$$

Based on the partition $\mathcal{T}_h$, we build the finite element space $V_h$ of piecewise polynomial functions of degree less than $r$ (see \cite{BrennerScott,Ciarlet}).  In order to perform the error analysis,
  we define the following piecewise type semi-norm for $v\in W^{j,p}(G)\cup V_h$ with $G\subseteq \Omega$
\begin{eqnarray*}
|v|_{j,p,G,h}&:=&\left(\sum_{K\in\mathcal{T}_h^G}\int_{K}\sum_{|\alpha|=j}|D^{\alpha}v|^pdK\right)^{\frac{1}{p}},
\ \ \ \ 1\leq p<\infty,
\end{eqnarray*}
and
\begin{eqnarray*}
|v|_{j,\infty,G,h}&:=&\max_{K\in \mathcal{T}_h^G}|v|_{j,\infty,K}.
\end{eqnarray*}
We will drop $G$ when $G=\Omega$. Throughout this paper, the symbol
$C$ (with or without subscript) stands for a positive generic constant which may attain different
values at its different occurrences and which is independent of the mesh size $h$, but may depend
on the exact solution $u$.


\begin{theorem}\label{Lower_Bound_Theorem_Approximation_1}
Assume $u\in W^{r+\delta,p}(G)$\ $(\delta>0)$ and that there exists a multi-index $\gamma$ with $|\gamma|=r$
such that $\|D^{\gamma}u\|_{0,p,G}>0$ and $D^{\gamma}v_h=0$ for any $v_h\in V_h$.
Then the following lower bound of the approximation error holds when the family $\{\mathcal{T}_h\}$ of partitions
   is quasi-uniform
\begin{equation}\label{lower_convergence_0_1}
\inf_{v_h\in V_h}\|u-v_h\|_{j,p,G,h}\geq C_1h^{r-j}, \ \ \ \ 0\leq j\leq r,
\end{equation}
where $1\leq p\leq \infty$ and $C_1$ is dependent on $u$.
\end{theorem}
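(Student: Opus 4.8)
\textit{Sketch of a possible proof.} Throughout, $h\to 0$ and the constant $C_1$ will be chosen independent of $h$; since \eqref{lower_convergence_0_1} is asymptotic in nature, this is enough. Write $\mathbb{P}_k(K)$ for the polynomials of total degree at most $k$ on $K$. The case $j=r$ is immediate and uses neither $\delta>0$ nor quasi-uniformity: since $D^\gamma v_h=0$ on every $K\in\mathcal{T}_h$, we have $D^\gamma(u-v_h)=D^\gamma u$ on each $K$, and hence
\[
\|u-v_h\|_{r,p,G,h}\ \ge\ |u-v_h|_{r,p,G,h}\ \ge\ \Big(\textstyle\sum_{K\in\mathcal{T}_h^G}\|D^\gamma u\|_{0,p,K}^p\Big)^{1/p}\ \xrightarrow[h\to 0]{}\ \|D^\gamma u\|_{0,p,G}\ >\ 0,
\]
with $\max$ over $K$ replacing the sum if $p=\infty$. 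This gives \eqref{lower_convergence_0_1} for $j=r$.

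Now fix $0\le j<r$, put $m:=r-j\ge 1$, and pick a multi-index $\beta\le\gamma$ with $|\beta|=j$ (possible since $|\gamma|=r\ge j$); then $\mu:=\gamma-\beta$ has $|\mu|=m$ and $D^\gamma=D^\mu D^\beta$. Since $v_h|_K\in\mathbb{P}_{r-1}(K)$, the function $D^\beta v_h|_K$ lies in $\mathbb{P}_{m-1}(K)$, so for every $v_h\in V_h$
\[
\|u-v_h\|_{j,p,G,h}\ \ge\ |u-v_h|_{j,p,G,h}\ \ge\ \Big(\textstyle\sum_{K\in\mathcal{T}_h^G}\operatorname{dist}_{L^p(K)}\!\big(D^\beta u,\mathbb{P}_{m-1}(K)\big)^p\Big)^{1/p}
\]
(with $\max$ for $p=\infty$), and we are reduced to bounding this right-hand side below by $Ch^m$. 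The crux will be a \emph{local lower bound}: for each $K\in\mathcal{T}_h^G$,
\[
\operatorname{dist}_{L^p(K)}\!\big(D^\beta u,\mathbb{P}_{m-1}(K)\big)\ \ge\ c\,h^{\,n/p+m}\,|a_K|\ -\ C\,h^{m+\delta}\,|D^\beta u|_{m+\delta,p,K},
\]
where $a_K$ is a weighted average of $D^\gamma u$ over a ball of radius comparable to $h_K$ inside $K$ and $c,C>0$ are independent of $h$. To prove this, set $w:=D^\beta u\in W^{m+\delta,p}(K)$ and let $Q_Kw\in\mathbb{P}_m(K)$ be the averaged Taylor polynomial of $w$ of degree $m$ on $K$. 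By the Bramble--Hilbert lemma $\|w-Q_Kw\|_{0,p,K}\le C h_K^{m+\delta}|w|_{m+\delta,p,K}$ --- this is precisely where $\delta>0$ is used, to make the approximation remainder of order \emph{higher} than $h_K^m$. As $\mathbb{P}_{m-1}(K)$ absorbs the part of $Q_Kw$ of degree $<m$, the distance $\operatorname{dist}_{L^p(K)}(Q_Kw,\mathbb{P}_{m-1}(K))$ equals the distance of the homogeneous degree-$m$ part $(Q_Kw)_m$ to $\mathbb{P}_{m-1}(K)$; mapping $K$ onto a reference element with uniformly bounded distortion (using quasi-uniformity), rescaling, and using that on the finite-dimensional space of homogeneous degree-$m$ polynomials $P\mapsto\operatorname{dist}_{L^p}(P,\mathbb{P}_{m-1})$ is a norm --- hence dominates from below the modulus of the $\mu$-coefficient of $P$ --- we obtain $\operatorname{dist}_{L^p(K)}((Q_Kw)_m,\mathbb{P}_{m-1}(K))\ge c\,h_K^{n/p+m}|a_K|$, where $a_K:=D^\mu(Q_Kw)_m$ is a weighted average of $D^\mu w=D^\gamma u$ over the averaging ball. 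The local bound follows by combining this with the Bramble--Hilbert estimate and the fact that $h_K$ is comparable to $h$.

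Summing the local bound in $\ell^p$ over $K\in\mathcal{T}_h^G$ and using Minkowski's inequality to move the subtraction outside,
\[
\Big(\textstyle\sum_K\operatorname{dist}_{L^p(K)}(D^\beta u,\mathbb{P}_{m-1}(K))^p\Big)^{1/p}\ \ge\ c\,h^m\Big(\textstyle\sum_K h^n|a_K|^p\Big)^{1/p}\ -\ C\,h^{m+\delta}\,|D^\beta u|_{m+\delta,p,G}.
\]
Since $|K|\le h^n$, one has $\sum_K h^n|a_K|^p\ge\sum_K|K|\,|a_K|^p$, and the latter is the $p$-th power of the $L^p$-norm, over $\bigcup_{K\in\mathcal{T}_h^G}K$, of a piecewise-constant local-averaging approximant of $D^\gamma u$; by density of piecewise constants in $L^p$ and absolute continuity of the Lebesgue integral (to deal with $G\setminus\bigcup_K K$) this tends to $\|D^\gamma u\|_{0,p,G}^p>0$ as $h\to 0$. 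As $|D^\beta u|_{m+\delta,p,G}\le\|u\|_{r+\delta,p,G}<\infty$ and $h^\delta\to 0$, the right-hand side above exceeds $C_1 h^m=C_1 h^{r-j}$ for all $h$ small, which is \eqref{lower_convergence_0_1}. For $p=\infty$ the factor $h^{n/p}$ is absent, so no summation is needed: choosing $x_0$ with $D^\gamma u(x_0)\ne 0$ (legitimate because $D^\gamma u$ is continuous under the hypotheses and $\|D^\gamma u\|_{0,\infty,G}>0$) and applying the local bound on the element of $\mathcal{T}_h^G$ containing $x_0$ already yields the conclusion.

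I expect the main obstacle to be the local lower bound on $\operatorname{dist}_{L^p(K)}(D^\beta u,\mathbb{P}_{m-1}(K))$. A naive ``reverse Bramble--Hilbert'' inequality of the form $\operatorname{dist}_{L^p(K)}(w,\mathbb{P}_{m-1}(K))\gtrsim h_K^m|w|_{m,p,K}$ is \emph{false} for arbitrary $w$ --- highly oscillatory functions are $L^p$-close to $\mathbb{P}_{m-1}$ while having arbitrarily large $m$-th order derivatives --- so the argument must genuinely use that $u$ is \emph{fixed} and slightly smoother than $W^{r,p}$: on a shrinking element $D^\beta u$ looks like its degree-$m$ Taylor polynomial, whose distance to $\mathbb{P}_{m-1}(K)$ is computable and, once summed over the grid, bounded away from zero precisely because $\|D^\gamma u\|_{0,p,G}>0$. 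The remaining points --- the mismatch between $G$ and $\bigcup_K K$, and replacing point values of $D^\gamma u$ by local averages --- are routine.
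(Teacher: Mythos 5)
Your sketch is essentially correct, but it proves the theorem by a genuinely different route than the paper. The paper argues by contradiction: assuming $\inf_{v_h}\|u-v_h\|_{j,p,G,h}<\varepsilon h^{r-j}$ with $h^{\delta}\|u\|_{r+\delta,p,G}<\varepsilon$, it inserts an order-$(r+\delta)$ interpolant $\Pi_h^r u$ and applies the inverse inequality (this is the only place quasi-uniformity is used) to conclude $|u-v_h|_{r,p,G,h}\le C\varepsilon$, and then the hypothesis $D^{\gamma}v_h=0$ gives $\|D^{\gamma}u\|_{0,p,G}\le C\varepsilon$, a contradiction — two displays, no local analysis. You instead argue directly and locally: reduce to $\operatorname{dist}_{L^p(K)}(D^{\beta}u,\mathbb{P}_{m-1}(K))$, bound it below via the averaged Taylor polynomial, norm equivalence on the finite-dimensional space of homogeneous degree-$m$ polynomials (the quotient-norm argument dominating the $\mu$-coefficient is fine, and $a_K=D^{\mu}Q_Kw$ being a mollified average of $D^{\gamma}u$ is the standard commutation property of averaged Taylor polynomials), then sum and use $L^p$-convergence of the piecewise local averages. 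What each approach buys: the paper's proof is much shorter and uses only $D^{\gamma}v_h=0$ together with an inverse estimate, which is why the same theorem can be invoked in Section 3 for $\mathcal{Q}_{\ell}$, $Q_1^{\rm rot}$, $ECR$ and $EQ_1^{\rm rot}$ spaces whose local polynomial degree exceeds $r-1$; your reduction step ``$D^{\beta}v_h|_K\in\mathbb{P}_{m-1}(K)$'' genuinely needs $v_h|_K\in\mathbb{P}_{r-1}(K)$ and fails for those spaces (e.g.\ $Q_1$, $\gamma=(2,0)$, $\beta=(1,0)$: $\partial_x(xy)=y\notin\mathbb{P}_0$), so to recover the paper's scope you would have to replace $\mathbb{P}_{m-1}(K)$ by $\{D^{\beta}q:\ q\ \text{in the local shape space}\}$, a subspace of the kernel of $D^{\mu}$, and rerun the quotient-norm/scaling argument there (scaling, rather than mapping to a fixed reference element, keeps that kernel invariant). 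Conversely, your proof is constructive rather than by contradiction, avoids the inverse inequality, yields an explicit asymptotic constant proportional to $\|D^{\gamma}u\|_{0,p,G}$, and treats carefully the mismatch between $G$ and $\bigcup_{K\in\mathcal{T}_h^G}K$, which the paper silently writes as an equality $\|D^{\gamma}u\|_{0,p,G}=\|D^{\gamma}(u-v_h)\|_{0,p,G,h}$; both proofs, yours explicitly and the paper's implicitly, establish the bound for all sufficiently small $h$.
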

\begin{proof}
We prove the result (\ref{lower_convergence_0_1}) by a reduction process. The assumption
that (\ref{lower_convergence_0_1}) is not correct
means that for an arbitrarily small $\varepsilon>0$ there exist small enough $h$ and  $v_h\in V_h$ such that
  $h^{\delta}\|u\|_{r+\delta,p,G}<\varepsilon$ and
\begin{eqnarray}\label{assumption_reduction_1}
\frac{\|u-v_h\|_{j,p,G,h}}{h^{r-j}}&<&\varepsilon.
\end{eqnarray}
Now we show that (\ref{assumption_reduction_1}) leads to a contradiction.

Combining $u\in W^{r+\delta,p}(G)$, (\ref{assumption_reduction_1}), the quasi-uniform property of
$\mathcal{T}_h$ and the inverse inequality for finite element functions, we have
\begin{eqnarray}\label{Error_H_2}
|u-v_h|_{r,p,G,h}&\leq& \|u-\Pi_h^{r}u\|_{r,p,G,h}+\|\Pi_h^{r}u-v_h\|_{r,p,G,h}\nonumber\\
&\leq&C_2h^{\delta}\|u\|_{r+\delta,p,G}+ C_3h^{j-r}\|\Pi_h^{r}u-v_h\|_{j,p,G,h}\nonumber\\
&\leq&C_2h^{\delta}\|u\|_{r+\delta,p,G}+ C_3h^{j-r}\|\Pi_h^{r}u-u\|_{j,p,G,h}\nonumber\\
&&+C_3h^{j-r}\|u-v_h\|_{j,p,G,h}\nonumber\\
&\leq& (C_2+C_3C_4)h^{\delta}\|u\|_{r+\delta,p,G} +C_2\varepsilon\nonumber\\
&\leq&(C_2+C_3C_4+C_3)\varepsilon,
\end{eqnarray}
where $\Pi_h^{r}u$ denotes a piecewise $r$ degree  polynomial interpolant of $u$ (discontinuous
or continuous) such that
\begin{eqnarray*}
\|u-\Pi_h^{r}u\|_{\ell,p,G,h}&\leq &C_4h^{r+\delta-\ell}\|u\|_{r+\delta,p,G},\ \ \ \ 0\leq \ell\leq r.
\end{eqnarray*}
Then the condition $D^{\gamma}v_h=0$ leads to
\begin{eqnarray*}
\|D^{\gamma}u\|_{0,p,G}=\|D^{\gamma} (u-v_h)\|_{0,p,G,h}\leq |u-v_h|_{r,p,G,h} &\leq &C\varepsilon,
\end{eqnarray*}
where $C=C_2+C_3C_4+C_3$.
This contradicts the inequality $\|D^{\gamma}u\|_{0,p,G}>0$ and thus the assumption
 (\ref{assumption_reduction_1}) is not true. Therefore, the lower bound
result (\ref{lower_convergence_0_1}) holds and the proof is complete.
\end{proof}

The result in Theorem \ref{Lower_Bound_Theorem_Approximation_1} can be extended to a regular
 family partitions and more general Sobolev space norms.

\begin{theorem}\label{Lower_Bound_Corollary_Approximation_1}
Assume $u\in W^{r+\delta,p}(G)$\ $(\delta>0)$ and that there exists a multi-index $\gamma$ with $|\gamma|=r$ such that $\|D^{\gamma}u\|_{0,p,G}>0$ and $D^{\gamma}v_h=0$ for any $v_h\in V_h$.
Then we have the following lower bound of the approximation error when the family $\{\mathcal{T}_h\}$ of
partitions  is regular
\begin{eqnarray}\label{lower_convergence_0_2}
\inf_{v_h\in V_h}\left(\sum_{K\in\mathcal{T}_h^G}h_K^{p(j-r)}
\big\|u-v_h\big\|_{j,p,K}^p\right)^{\frac{1}{p}}\geq C_5, \ \ \ \ 0\leq j\leq r,
\end{eqnarray}
and
\begin{eqnarray}\label{lower_convergence_0_3}
\inf_{v_h\in V_h}\left(\sum_{K\in\mathcal{T}_h^G}h_K^{p\big((j-r)+n(\frac{1}{p}-\frac{1}{q})\big)}
\big\|u-v_h\big\|_{j,q,K}^p\right)^{\frac{1}{p}}\geq C_6, \ \ \ \ 0\leq j\leq r,
\end{eqnarray}
where $1\leq p<\infty$, $1\leq q\leq \infty$ $(W^{r+\delta,p}(G)$ can be imbedded into $W^{j,q}(G))$,
 $C_5$ and $C_6$ are positive constants independent of mesh size $h_G$, but dependent on $u$.
\end{theorem}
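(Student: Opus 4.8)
The plan is to mimic the reduction argument of Theorem \ref{Lower_Bound_Theorem_Approximation_1}, but carried out element by element so that the regularity (rather than quasi-uniformity) of $\{\mathcal{T}_h\}$ suffices, and then to handle the mixed $(p,q)$ norm via a scaling (homogeneity) argument on the reference element. I would first observe that, since $\|D^\gamma u\|_{0,p,G}>0$ with $|\gamma|=r$, there is at least one element $K_0\in\mathcal{T}_h^G$ — in fact a positive fraction of the measure of $G$ — on which $\|D^\gamma u\|_{0,p,K_0}$ is bounded below by a fixed multiple of $h_{K_0}^{n/p}$; this is where the lower bound constant will ultimately come from. The key point is that the argument of the previous theorem is essentially local: on each $K$ one has the affine-equivalence estimate $\|u-\Pi_h^r u\|_{\ell,p,K}\le C h_K^{r+\delta-\ell}\|u\|_{r+\delta,p,\omega_K}$ (with $\omega_K$ the patch around $K$), and the inverse inequality $\|w_h\|_{r,p,K}\le C h_K^{j-r}\|w_h\|_{j,p,K}$ for a polynomial $w_h$, both of which require only the shape-regularity of $K$, not any global comparability of element sizes.

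For \eqref{lower_convergence_0_2}: suppose it fails, so that for every $\varepsilon>0$ there are arbitrarily small $h_G$ and $v_h\in V_h$ with $\big(\sum_{K}h_K^{p(j-r)}\|u-v_h\|_{j,p,K}^p\big)^{1/p}<\varepsilon$ and also $h_G^\delta\|u\|_{r+\delta,p,G}<\varepsilon$. Running the triangle-inequality/inverse-inequality chain of \eqref{Error_H_2} on each $K$ and raising to the $p$-th power and summing (using $h_K\le h_G$ and the bounded overlap of the patches $\omega_K$), I would obtain $|u-v_h|_{r,p,G,h}\le C\varepsilon$. Since $D^\gamma v_h=0$ piecewise, $\|D^\gamma u\|_{0,p,G}=\|D^\gamma(u-v_h)\|_{0,p,G,h}\le|u-v_h|_{r,p,G,h}\le C\varepsilon$, contradicting $\|D^\gamma u\|_{0,p,G}>0$. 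This gives \eqref{lower_convergence_0_2} with $C_5>0$ depending on $u$.

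For \eqref{lower_convergence_0_3}, the extra factor $h_K^{n(1/p-1/q)}$ is exactly the homogeneity correction so that the weighted $\ell^p$-aggregate of the local $W^{j,q}(K)$-norms scales like the weighted aggregate of the local $W^{j,p}(K)$-norms. Concretely, on the reference element $\hat K$ one has, for any polynomial $\hat w$ of degree $<r$ (and more generally for functions in a fixed finite-dimensional space plus the interpolation remainder, handled through $\Pi_h^r u$), the norm equivalence $\|\hat w\|_{j,q,\hat K}\le C\|\hat w\|_{j,p,\hat K}$ when $W^{r+\delta,p}\hookrightarrow W^{j,q}$; pulling back to $K$ under the affine map produces precisely the factor $h_K^{n(1/p-1/q)-(j-r)}\cdot h_K^{(j-r)}$, so that $h_K^{(j-r)+n(1/p-1/q)}\|u-v_h\|_{j,q,K}\le C\,h_K^{j-r}\|u-v_h\|_{j,p,\omega_K}+ C h_K^{\delta+\cdots}\|u\|_{r+\delta,p,\omega_K}$. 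Summing the $p$-th powers reduces \eqref{lower_convergence_0_3} to \eqref{lower_convergence_0_2}, yielding $C_6>0$. The main obstacle I anticipate is bookkeeping the scaling exponents correctly in the embedding step — in particular making sure the interpolation remainder $u-\Pi_h^r u$, which is not a polynomial, is still controlled in the $W^{j,q}(K)$-norm with the right power of $h_K$; this is done by applying the Sobolev embedding on the reference element to the remainder after the standard affine change of variables, using $\delta>0$ to absorb the embedding defect, rather than by any cleverness. Everything else is the same reduction-to-contradiction used in the first theorem.
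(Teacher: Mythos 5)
Your argument for \eqref{lower_convergence_0_2} is correct and is essentially the paper's own proof carried out with the local factors $h_K$ (interpolation estimate and inverse inequality applied element by element, so only shape regularity is used), ending in the same contradiction with $\|D^{\gamma}u\|_{0,p,G}>0$. The opening assertion that some element $K_0$ must satisfy $\|D^{\gamma}u\|_{0,p,K_0}\ge c\,h_{K_0}^{n/p}$ is not justified in general (only $\|D^{\gamma}u\|_{0,p,G}>0$ is available), but you never use it, so it is harmless.

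The gap is in the passage from \eqref{lower_convergence_0_2} to \eqref{lower_convergence_0_3}: your scaling inequality points the wrong way. What you establish (elementwise, then summed in the $p$-th power) is
\begin{equation*}
\Big(\sum_{K\in\mathcal{T}_h^G}h_K^{p\big((j-r)+n(\frac{1}{p}-\frac{1}{q})\big)}\|u-v_h\|_{j,q,K}^p\Big)^{\frac{1}{p}}\le C\Big(\sum_{K\in\mathcal{T}_h^G}h_K^{p(j-r)}\|u-v_h\|_{j,p,K}^p\Big)^{\frac{1}{p}}+Ch_G^{\delta}\|u\|_{r+\delta,p,G},
\end{equation*}
an upper bound of the quantity in \eqref{lower_convergence_0_3} by the quantity in \eqref{lower_convergence_0_2}. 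Combined with the lower bound \eqref{lower_convergence_0_2} this yields nothing: from $A\le CB+\varepsilon$ and $B\ge C_5$ one cannot conclude $A\ge C_6$. What is needed is the reverse elementwise estimate, $h_K^{j-r}\|u-v_h\|_{j,p,K}\le C\,h_K^{(j-r)+n(\frac{1}{p}-\frac{1}{q})}\|u-v_h\|_{j,q,K}+C\,h_K^{\delta}\|u\|_{r+\delta,p,K}$, obtained from the finite-dimensional equivalence $\|w_h\|_{j,p,K}\le C h_K^{n(\frac{1}{p}-\frac{1}{q})}\|w_h\|_{j,q,K}$ applied to $w_h=\Pi_h^r u-v_h$ together with interpolation error bounds for $u-\Pi_h^r u$ in both norms; then smallness of the left-hand side of \eqref{lower_convergence_0_3} forces $|u-v_h|_{r,p,G,h}\le C\varepsilon$ and the usual contradiction with $\|D^{\gamma}u\|_{0,p,G}>0$. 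This is exactly how the paper proceeds: it proves \eqref{lower_convergence_0_3} directly, using the inverse inequality from the piecewise $W^{r,p}$ seminorm down to the $h_K$-weighted $W^{j,q}$ norms, and then obtains \eqref{lower_convergence_0_2} as the special case $q=p$ of \eqref{lower_convergence_0_3}; your proposal runs the implication in the opposite direction, in which it does not follow as written.
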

\begin{proof}
We only prove the result (\ref{lower_convergence_0_3}), since (\ref{lower_convergence_0_2}) can be deduced
directly from (\ref{lower_convergence_0_3}).

To get (\ref{lower_convergence_0_3}), we will use a similar reduction as in the proof of
Theorem \ref{Lower_Bound_Theorem_Approximation_1}.
The assumption that (\ref{lower_convergence_0_3}) is not correct
 means that for an arbitrarily small $\varepsilon>0$,
there exist $v_h\in V_h$ and small enough $h_G$ such that
$h_G^{\delta}\|u\|_{r+\delta,p,G}<\varepsilon$ and
\begin{eqnarray}\label{assumption_reduction_2}
\left(\sum_{K\in\mathcal{T}_h^G}h_K^{p\big((j-r)+n(\frac{1}{p}-\frac{1}{q})\big)}
\big\|u-v_h\big\|_{j,q,K}^p\right)^{\frac{1}{p}}&<&\varepsilon.
\end{eqnarray}
We will show this assumption leads to a contradiction.
Combining $u\in W^{r+\delta,p}(G)$, (\ref{assumption_reduction_2}), and the inverse inequality
for piecewise polynomial functions, we have
\begin{eqnarray}\label{Error_H_l}
|u-v_h|_{r,p,G,h}&\leq& \|u-\Pi_h^{r}u\|_{r,p,G,h}+\|\Pi_h^{r}u-v_h\|_{r,p,G,h}\nonumber\\
&\leq&C_7h_G^{\delta}\|u\|_{r+\delta,p,G}+ C_8 \left(\sum_{K\in\mathcal{T}_h^G}h_K^{p\big((j-r)+n(\frac{1}{p}-\frac{1}{q})\big)}
\big\|\Pi_h^{r}u-v_h\big\|_{j,q,K}^p\right)^{\frac{1}{p}}\nonumber\\
&\leq&C_7h_G^{\delta}\|u\|_{r+\delta,p,G}
+C_8\left(\sum_{K\in\mathcal{T}_h^G}h_K^{p\big((j-r)+n(\frac{1}{p}-\frac{1}{q})\big)}
\big\|u-\Pi_h^{r}u\big\|_{j,q,K}^p\right)^{\frac{1}{p}}\nonumber\\
&&\ \ +C_8\left(\sum_{K\in\mathcal{T}_h^G}h_K^{p\big((j-r)+n(\frac{1}{p}-\frac{1}{q})\big)}
\big\|u-v_h\big\|_{j,q,K}^p\right)^{\frac{1}{p}}\nonumber\\
&\leq& (C_7+C_8C_9)h_{G}^{\delta}\|u\|_{r+\delta,p,G} +C_8\varepsilon\nonumber\\
&\leq&(C_7+C_8C_9+C_8)\varepsilon,
\end{eqnarray}
where $\Pi_h^{r}u$ denotes a piecewise $r$ degree polynomial interpolant of $u$
(discontinuous or continuous) for which we have the following error estimate \cite{BrennerScott,Ciarlet}
\begin{eqnarray*}
\|u-\Pi_h^{r}u\|_{\ell,q,K}&\leq &C_9h_K^{r+\delta-\ell+n\big(\frac{1}{q}-\frac{1}{p}\big)}
\|u\|_{r+\delta,p,K},\ \ \ \ 0\leq \ell\leq r,\ \ \forall K\in\mathcal{T}_h.
\end{eqnarray*}
Then combining (\ref{Error_H_l}) and the condition $D^{\gamma}v_h=0$ ($|\gamma|=r$) leads to
\begin{eqnarray*}
\|D^{\gamma}u\|_{0,p,G}=\|D^{\gamma} (u-v_h)\|_{0,p,G,h}\leq |u-v_h|_{r,p,G,h} &\leq &C\varepsilon,
\end{eqnarray*}
where $C=C_7+C_8C_9+C_8$.  This contradicts the condition $|D^{\gamma}u|_{0,p,G}>0$ and thus the
 assumption (\ref{assumption_reduction_2})  is not true.
 Hence the lower bound result (\ref{lower_convergence_0_3}) holds
 and the proof is complete.
\end{proof}

\section{Lower bounds for a second order elliptic problem}
In this section, as an application of Theorems \ref{Lower_Bound_Theorem_Approximation_1} and \ref{Lower_Bound_Corollary_Approximation_1},  we will derive the lower bounds of the discretization error for a second order elliptic problem and the corresponding eigenpair problem by the finite element method.

Here we are concerned with the Poisson problem
\begin{equation}\label{Poisson}
\left\{
\begin{array}{rcl}
-\Delta u&=&f\ \ \ {\rm in}\ \Omega,\\
u&=&0\ \ \ {\rm on}\ \partial\Omega,
\end{array}
\right.
\end{equation}
and the corresponding eigenpair problem:

Find $(\lambda, u)$ such that $\|u\|_0=1$ and
\begin{equation}\label{Laplace_Eigenvalue}
\left\{
\begin{array}{rcl}
-\Delta u&=&\lambda u\ \ \ {\rm in}\ \Omega,\\
u&=&0\ \ \ \ \ {\rm on}\ \partial\Omega.
\end{array}
\right.
\end{equation}
Based on the partition $\mathcal{T}_h$ on $\bar{\Omega}$, we define a suitable
finite element space $V_h$ (conforming or nonconforming for the second order elliptic problem)
 with piecewise polynomials of degree less than $r$.

Then the finite element approximation of (\ref{Poisson}) consists of finding $u_h\in V_h$ such that
\begin{eqnarray}\label{Poisson_FEM}
a_h(u_h, v_h)&=&(f,v_h)\ \ \ \forall v_h\in V_h,
\end{eqnarray}
where $$a_h(u_h,v_h)=\sum_{K\in\mathcal{T}_h}\int_{K}\nabla u_h \nabla v_hdK.$$

From the standard error estimate theory of the finite element
method, it is known that the following upper bound of the
discretization error (see \cite{BrennerScott,Ciarlet}) holds
\begin{eqnarray}\label{upper_bound}
\|u-u_h\|_{\ell,p,h} &\leq& Ch^{s-\ell}\|u\|_{s,p} ,\ \ \ \ 0\leq\ell\leq 1,\ \ 0<s\leq r,
\end{eqnarray}
where $ 1<p<\infty$.

From Theorems \ref{Lower_Bound_Theorem_Approximation_1} and
\ref{Lower_Bound_Corollary_Approximation_1}, we state the following
lower bound results of the discretization error.
\begin{corollary}\label{Lower_bound_Solution_Theorem}
Assume there exist a subdomain $G\subset\subset\Omega$ such that $u\in W^{r+\delta,p}(G)$\ $(\delta>0)$ and
 a multi-index $\gamma$ with $|\gamma|=r$ such that $\|D^{\gamma}u\|_{0,p,G}>0$
and  $D^{\gamma}v_h=0$ for any $v_h\in V_h$.
If the family $\{\mathcal{T}_h\}$  of partitions is quasi-uniform, the finite element solution $u_h\in V_h$ in
(\ref{Poisson_FEM}) has the following lower bound of the discretization error
\begin{equation}\label{lower_convergence_1_1}
\|u-u_h\|_{j,p,h} \geq C_{10}h^{r-j},\ \ \ \ 0\leq j\leq r,
\end{equation}
where $1\leq p\leq \infty$,  $C_{10}$ is a positive constant dependent on $u$ and the error
estimate (\ref{upper_bound}) is optimal for $s=r$ and $0\leq \ell\leq 1$.
\end{corollary}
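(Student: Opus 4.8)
The plan is to deduce the corollary directly from Theorem~\ref{Lower_Bound_Theorem_Approximation_1}, exploiting the one extra fact that the Galerkin solution $u_h$ belongs to $V_h$ and is therefore an admissible competitor in the best-approximation infimum appearing in (\ref{lower_convergence_0_1}). First I would record the domain monotonicity of the broken norm: since $\mathcal{T}_h^G\subseteq\mathcal{T}_h$ and $\|\cdot\|_{j,p,G,h}$ is assembled as a sum over the elements of $\mathcal{T}_h^G$ (as a maximum when $p=\infty$), every admissible $w$ satisfies $\|w\|_{j,p,h}\ge\|w\|_{j,p,G,h}$; in particular
\[
\|u-u_h\|_{j,p,h}\ \ge\ \|u-u_h\|_{j,p,G,h}\ \ge\ \inf_{v_h\in V_h}\|u-v_h\|_{j,p,G,h},\qquad 0\le j\le r,
\]
the last inequality holding because $u_h\in V_h$. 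Then I would observe that the hypotheses of the corollary --- $u\in W^{r+\delta,p}(G)$ with $\delta>0$, a multi-index $\gamma$ with $|\gamma|=r$, $\|D^{\gamma}u\|_{0,p,G}>0$, $D^{\gamma}v_h=0$ for all $v_h\in V_h$, and quasi-uniformity of $\{\mathcal{T}_h\}$ --- are exactly those needed to apply Theorem~\ref{Lower_Bound_Theorem_Approximation_1} on the subdomain $G$ (using only that restricting a quasi-uniform family on $\Omega$ to the elements contained in $G$ yields a quasi-uniform family of partitions of $G$, with $h_G$ comparable to $h$ so that the distinction is immaterial). Theorem~\ref{Lower_Bound_Theorem_Approximation_1} then gives $\inf_{v_h\in V_h}\|u-v_h\|_{j,p,G,h}\ge C_1 h^{r-j}$, and combining this with the display above yields (\ref{lower_convergence_1_1}) with $C_{10}:=C_1$.

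For the optimality assertion I would simply combine the two-sided bounds. Taking $s=r$ in (\ref{upper_bound}) and $j=\ell$ in (\ref{lower_convergence_1_1}) gives, for $1<p<\infty$ and $0\le\ell\le1$,
\[
C_{10}\,h^{r-\ell}\ \le\ \|u-u_h\|_{\ell,p,h}\ \le\ C\,h^{r-\ell}\|u\|_{r,p},
\]
so the convergence order $r-\ell$ in (\ref{upper_bound}) cannot be improved; that is, (\ref{upper_bound}) is optimal at $s=r$ for $0\le\ell\le1$.

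I do not anticipate a genuine obstacle: all of the analytic substance already resides in Theorem~\ref{Lower_Bound_Theorem_Approximation_1}, and the corollary is essentially the remark that $u_h\in V_h$ combined with the localization to $G$. The only points deserving a word of care are the domain monotonicity of the broken norm and the quasi-uniformity of the restricted mesh (both routine), together with the tacit nonvacuity of the hypotheses --- namely that, for the Poisson problem, interior elliptic regularity indeed produces a subdomain $G\subset\subset\Omega$ on which $u$ is smooth while some $r$-th order derivative $D^{\gamma}u$ does not vanish identically; this is assumed here rather than proved. One should also keep in mind that for $2\le j\le r$ the global quantity $\|u-u_h\|_{j,p,h}$ may be $+\infty$ when $u$ lacks global $W^{j,p}$ regularity, in which case (\ref{lower_convergence_1_1}) is trivially true and the meaningful content is the localized estimate $\|u-u_h\|_{j,p,G,h}\ge C_{10}h^{r-j}$.
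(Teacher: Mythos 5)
Your proposal is correct and follows essentially the same route as the paper: bound the global broken norm from below by its restriction to $G$, use $u_h\in V_h$ to pass to the infimum over $V_h$, and invoke Theorem~\ref{Lower_Bound_Theorem_Approximation_1}; the optimality remark then follows by juxtaposing (\ref{upper_bound}) with (\ref{lower_convergence_1_1}). Your additional comments on the restricted mesh, interior regularity, and possibly infinite global norms are sensible clarifications but not needed beyond what the paper's two-line proof already does.
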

\begin{proof}
First we have the following property
\begin{eqnarray*}
\frac{\|u-u_h\|_{j,p,h}}{h^{r-j}}&\geq& \frac{\|u-u_h\|_{j,p,G,h}}{h^{r-j}}
\geq \inf_{v_h\in V_h} \frac{\|u-v_h\|_{j,p,G,h}}{h^{r-j}}.
\end{eqnarray*}
So the desired result (\ref{lower_convergence_1_1}) can be directly deduced by (\ref{lower_convergence_0_1}).
\end{proof}

\begin{remark}
The interior regularity result $u\in W^{r+\delta,p}(G)$ for a subdomain $G\subset\subset\Omega$ and $\delta>0$ for elliptic problem (\ref{Poisson}) can be obtained from
\cite[Theorem 8.10]{GilbargTrudinger} for the right-hand side $f$ smooth enough.
\end{remark}

\begin{corollary}\label{Lower_Bound_Corollary}
Assume there exist a subdomain $G\subset\subset\Omega$ such that $u\in W^{r+\delta,p}(G)$\ $(\delta>0)$ and
 a multi-index $\gamma$ with $|\gamma|=r$ such that $\|D^{\gamma}u\|_{0,p,G}>0$
and  $D^{\gamma}v_h=0$ for any $v_h\in V_h$.
If the family $\{\mathcal{T}_h\}$ of partitions is regular, the finite element solution $u_h\in V_h$ in
(\ref{Poisson_FEM}) has the following lower bound of the discretization error
\begin{equation}\label{lower_convergence_1_2}
\left(\sum_{K\in\mathcal{T}_h^G}h_K^{p(j-r)}
\big\|u-u_h\big\|_{j,p,K}^p\right)^{\frac{1}{p}}  \geq C_{11},\ \ \ \ 0\leq j\leq r,
\end{equation}
and
\begin{equation}\label{lower_convergence_1_3}
\left(\sum_{K\in\mathcal{T}_h^G}h_K^{p\big((j-r)+n(\frac{1}{p}-\frac{1}{q})\big)}
\big\|u-u_h\big\|_{j,q,K}^p\right)^{\frac{1}{p}} \geq C_{12},\ \ \ \ 0\leq j\leq r.
\end{equation}
where $1\leq p<\infty$, $1\leq q\leq \infty$ $(W^{r+\delta,p}(G)$ can be imbedded into $W^{j,q}(G))$, $C_{11}$
and $C_{12}$ are  positive constants dependent on $u$.
\end{corollary}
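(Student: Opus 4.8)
The plan is to deduce Corollary~\ref{Lower_Bound_Corollary} from Theorem~\ref{Lower_Bound_Corollary_Approximation_1} in exactly the way Corollary~\ref{Lower_bound_Solution_Theorem} was deduced from Theorem~\ref{Lower_Bound_Theorem_Approximation_1}: the finite element solution $u_h$ is a particular member of $V_h$, so the quantities on the left of (\ref{lower_convergence_1_2}) and (\ref{lower_convergence_1_3}) are bounded below by the corresponding infima over $v_h\in V_h$, which are in turn controlled from below by Theorem~\ref{Lower_Bound_Corollary_Approximation_1}.

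Concretely, first I would record the elementary observation that, since $u_h\in V_h$, for every $0\le j\le r$,
\begin{equation*}
\left(\sum_{K\in\mathcal{T}_h^G}h_K^{p\big((j-r)+n(\frac{1}{p}-\frac{1}{q})\big)}\big\|u-u_h\big\|_{j,q,K}^p\right)^{\frac{1}{p}}\ \ge\ \inf_{v_h\in V_h}\left(\sum_{K\in\mathcal{T}_h^G}h_K^{p\big((j-r)+n(\frac{1}{p}-\frac{1}{q})\big)}\big\|u-v_h\big\|_{j,q,K}^p\right)^{\frac{1}{p}},
\end{equation*}
and likewise with $q=p$ (so that the scaling exponent $n(\tfrac1p-\tfrac1q)$ drops) for the quantity in (\ref{lower_convergence_1_2}). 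Next I would check that the hypotheses invoked are precisely those of Theorem~\ref{Lower_Bound_Corollary_Approximation_1}: the interior regularity $u\in W^{r+\delta,p}(G)$ on some $G\subset\subset\Omega$, the existence of a multi-index $\gamma$ with $|\gamma|=r$ such that $\|D^{\gamma}u\|_{0,p,G}>0$ and $D^{\gamma}v_h=0$ for all $v_h\in V_h$, the regularity of the family $\{\mathcal{T}_h\}$, and — for (\ref{lower_convergence_1_3}) — the embedding $W^{r+\delta,p}(G)\hookrightarrow W^{j,q}(G)$. Applying (\ref{lower_convergence_0_2}) then yields (\ref{lower_convergence_1_2}), and applying (\ref{lower_convergence_0_3}) yields (\ref{lower_convergence_1_3}), with $C_{11}$ and $C_{12}$ taken to be the constants $C_5$ and $C_6$ supplied by that theorem.

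I do not expect any genuine obstacle: the entire analytic content — the inverse inequality for piecewise polynomials, the anisotropic interpolation estimate on regular grids, and the reduction-to-contradiction argument — is already packaged inside Theorem~\ref{Lower_Bound_Corollary_Approximation_1}, and the only role of the elliptic problem (\ref{Poisson}) is to guarantee, through interior regularity, the existence of a function $u$ enjoying the required smoothness and the nondegeneracy $\|D^{\gamma}u\|_{0,p,G}>0$ on some $G\subset\subset\Omega$. The one remark worth making is that the bounds are phrased over $\mathcal{T}_h^G$ rather than over all of $\mathcal{T}_h$; since $\mathcal{T}_h^G\subseteq\mathcal{T}_h$ and all weights are nonnegative, the sums over the full triangulation are at least as large, so the same lower bounds persist globally and are directly comparable to the upper bound (\ref{upper_bound}).
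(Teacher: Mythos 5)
Your proof is correct and is essentially the paper's own argument: since $u_h\in V_h$, each weighted sum is bounded below by the corresponding infimum over $V_h$, and Theorem~\ref{Lower_Bound_Corollary_Approximation_1} (estimates (\ref{lower_convergence_0_2}) and (\ref{lower_convergence_0_3})) then gives the constants $C_{11}$ and $C_{12}$. No gaps; the remark about sums over $\mathcal{T}_h^G$ versus $\mathcal{T}_h$ is a harmless addition beyond what the paper states.
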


\begin{proof}
The proof can be given using the following property
\begin{eqnarray*}
\sum_{K\in\mathcal{T}_h^G}h_K^{p\big((j-r)+n(\frac{1}{p}-\frac{1}{q})\big)}
\big\|u-u_h\big\|_{j,q,K}^p \geq \inf_{v_h\in V_h}\sum_{K\in\mathcal{T}_h^G}h_K^{p\big((j-r)+n(\frac{1}{p}-\frac{1}{q})\big)}
\big\|u-v_h\big\|_{j,q,K}^p
\end{eqnarray*}
and Theorem \ref{Lower_Bound_Corollary_Approximation_1}.
\end{proof}

\begin{remark}
In \cite{ChenLi}, the lower bound of the discretization error by 
Wilson element has been analyzed under the conditions of the
rectangular partition and the regularity $u\in
W^{3,\infty}(\Omega)$.
\end{remark}

Now let us consider the lower bound analysis of the eigenpair problem (\ref{Laplace_Eigenvalue})
by the finite element method. The finite element approximation
$(\lambda_h,u_h)\in\mathcal{R}\times V_h$  of
(\ref{Laplace_Eigenvalue}) satisfies $\|u_h\|_0=1$ and
\begin{eqnarray}\label{Poisson_Eigenvalue_FEM}
a_h(u_h,v_h)&=&\lambda_h(u_h,v_h)\ \ \ \forall v_h\in V_h.
\end{eqnarray}
For the eigenfunction approximation $u_h$ in (\ref{Poisson_Eigenvalue_FEM}), the following
lower bound results hold.
\begin{corollary}\label{Lower_bound_Eigenvalue_Corollary}
Assume there exist
 a multi-index $\gamma$ with $|\gamma|=r$ such that $D^{\gamma}v_h=0$ for any $v_h\in V_h$.
If the family $\{\mathcal{T}_h\}$ of partitions  is quasi-uniform, the eigenpair approximation
$(\lambda_h, u_h)\in \mathcal{R}\times V_h$ in  (\ref{Poisson_Eigenvalue_FEM}) satisfies the following
lower bound of the discretization error
\begin{equation}\label{lower_convergence_2_1}
\|u-u_h\|_{j,p,h}\geq C_{13}h^{r-j},\ \ \ \ 0\leq j\leq r,
\end{equation}
where $1\leq p\leq \infty$ and $C_{13}$ is a positive constant dependent on $u$. 

Furthermore, if the family $\{\mathcal{T}_h\}$ of partitions  is regular, $(\lambda_h, u_h)$
has the following lower bounds of the discretization error
\begin{equation}\label{lower_convergence_2_2}
\left(\sum_{K\in\mathcal{T}_h^G}h_K^{p(j-r)}
\big\|u-u_h\big\|_{j,p,K}^p\right)^{\frac{1}{p}} \geq C_{14},\ \ \ \ 0\leq j\leq r.
\end{equation}
and
\begin{equation}\label{lower_convergence_2_3}
\left(\sum_{K\in\mathcal{T}_h^G}h_K^{p\big((j-r)+n(\frac{1}{p}-\frac{1}{q})\big)}
\big\|u-u_h\big\|_{j,q,K}^p\right)^{\frac{1}{p}} \geq C_{15},\ \ \ \ 0\leq j\leq r.
\end{equation}
where $1\leq p <\infty$, $1\leq q\leq \infty$,  $C_{14}$ and $C_{15}$ are  positive
constants dependent on $u$. 
\end{corollary}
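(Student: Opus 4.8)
The plan is to reduce the statement to Theorems~\ref{Lower_Bound_Theorem_Approximation_1} and \ref{Lower_Bound_Corollary_Approximation_1} exactly as was done for the source problem in Corollaries~\ref{Lower_bound_Solution_Theorem}--\ref{Lower_Bound_Corollary}. The observation that makes this work is that the proofs of those corollaries never used anything about $u_h$ beyond the membership $u_h\in V_h$: from $u_h\in V_h$ one gets $\|u-u_h\|_{j,p,h}\ge\|u-u_h\|_{j,p,G,h}\ge\inf_{v_h\in V_h}\|u-v_h\|_{j,p,G,h}$, and similarly $\sum_{K\in\mathcal{T}_h^G}h_K^{p((j-r)+n(1/p-1/q))}\|u-u_h\|_{j,q,K}^p\ge\inf_{v_h\in V_h}\sum_{K\in\mathcal{T}_h^G}h_K^{p((j-r)+n(1/p-1/q))}\|u-v_h\|_{j,q,K}^p$. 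Since the finite element eigenfunction $u_h$ in (\ref{Poisson_Eigenvalue_FEM}) also lies in $V_h$, the only task left is to exhibit a subdomain $G\subset\subset\Omega$ and a multi-index $\gamma$ with $|\gamma|=r$ for which the hypotheses of Theorems~\ref{Lower_Bound_Theorem_Approximation_1} and \ref{Lower_Bound_Corollary_Approximation_1} hold, namely $u\in W^{r+\delta,p}(G)$ for some $\delta>0$, $\|D^{\gamma}u\|_{0,p,G}>0$, and $D^{\gamma}v_h=0$ for all $v_h\in V_h$.

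I would first dispose of the two easy requirements. Interior elliptic regularity (indeed analytic hypoellipticity of $-\Delta$) guarantees that an eigenfunction $u$ of (\ref{Laplace_Eigenvalue}) is real-analytic in $\Omega$, hence $u\in C^{\infty}(\Omega)$ and therefore $u\in W^{r+\delta,p}(G)$ for every $\delta>0$ and every $G\subset\subset\Omega$; this gives the smoothness hypothesis. Moreover, since $|\gamma|=r$ and $V_h$ consists of piecewise polynomials of degree less than $r$, one automatically has $D^{\gamma}v_h=0$ on every element for every $\gamma$ with $|\gamma|=r$, which is precisely the standing assumption of the corollary.

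The substantive step is the non-degeneracy $\|D^{\gamma}u\|_{0,p,G}>0$ for suitable $(G,\gamma)$. I would argue by contradiction. If for some nonempty open $G\subset\subset\Omega$ all derivatives $D^{\gamma}u$ with $|\gamma|=r$ vanished identically on $G$, then $u$ would coincide on $G$ with a polynomial $P$ of degree at most $r-1$. Inserting this into the equation yields $-\Delta P=\lambda P$ on $G$, and since two polynomials that agree on a nonempty open set are equal, $-\Delta P=\lambda P$ as polynomials. But $\lambda>0$ for the Dirichlet Laplacian, while $\Delta P$ is a polynomial of degree at most $\deg P-2$ (vanishing when $\deg P\le 1$), so the identity $P=-\lambda^{-1}\Delta P$ forces $P\equiv 0$, hence $u\equiv 0$ on $G$. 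By analyticity (equivalently, by the unique continuation principle for $-\Delta u=\lambda u$) this gives $u\equiv 0$ on $\Omega$, contradicting $\|u\|_0=1$. Consequently, for every nonempty open $G\subset\subset\Omega$ there is a multi-index $\gamma$ with $|\gamma|=r$ and $\|D^{\gamma}u\|_{0,p,G}>0$; I fix one such pair $(G,\gamma)$.

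With $(G,\gamma)$ in hand the conclusion is immediate: for a quasi-uniform family, Theorem~\ref{Lower_Bound_Theorem_Approximation_1} gives $\inf_{v_h\in V_h}\|u-v_h\|_{j,p,G,h}\ge C h^{r-j}$, which together with $\|u-u_h\|_{j,p,h}\ge\|u-u_h\|_{j,p,G,h}$ yields (\ref{lower_convergence_2_1}); for a regular family, Theorem~\ref{Lower_Bound_Corollary_Approximation_1} together with the localization inequalities above yields (\ref{lower_convergence_2_2}) and (\ref{lower_convergence_2_3}). I expect the only real obstacle to be the non-degeneracy step, i.e.\ justifying that a nonzero Dirichlet eigenfunction of $-\Delta$ cannot agree with a polynomial of degree less than $r$ on any open subset; this is exactly where interior analyticity (or unique continuation) for the elliptic eigenvalue equation enters, and the remainder of the argument is a verbatim repetition of the source-problem case.
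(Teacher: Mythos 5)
Your proposal is correct and follows essentially the same route as the paper: localize via $\|u-u_h\|_{j,p,h}\ge\inf_{v_h\in V_h}\|u-v_h\|_{j,p,G,h}$ (and its weighted analogue), then invoke Theorems~\ref{Lower_Bound_Theorem_Approximation_1} and \ref{Lower_Bound_Corollary_Approximation_1} after showing an eigenfunction cannot coincide with a polynomial of degree less than $r$ on any open $G\subset\subset\Omega$; in fact you are more explicit than the paper on two points it leaves implicit, namely interior regularity (analyticity) giving $u\in W^{r+\delta,p}(G)$ and the unique-continuation step turning $u\equiv 0$ on $G$ into a contradiction with $\|u\|_0=1$. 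One caveat: your remark that $D^{\gamma}v_h=0$ holds automatically for \emph{every} $|\gamma|=r$ is only valid when the local spaces have total degree less than $r$ (as in Section 2's literal definition), not for the $\mathcal{Q}_\ell$-, $ECR$- or $Q_1^{\rm rot}$-type examples; in that general setting your non-degeneracy argument (like the paper's, which only concludes $|u|_{r,p,G}>0$) produces \emph{some} nonvanishing order-$r$ derivative rather than the specific $\gamma$ assumed to annihilate $V_h$, so both proofs share this same slight looseness.
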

\begin{proof}
First, it is easy to obtain that the eigenfunctions of problem (\ref{Laplace_Eigenvalue}) cannot
 be polynomial of bounded degree on any subdomain $G\subset\subset\Omega$. We prove this by reduction process.
 Assume the exact eigenfunction is a polynomial function and $u\in \mathcal{P}_{\ell}(G)$ for
 some integer $\ell>0$. Directly from the definition of problem (\ref{Laplace_Eigenvalue}), we have
\begin{eqnarray}
-\Delta^{\lceil\frac{\ell}{2}\rceil}u &=&(-1)^{\lceil\frac{\ell}{2}\rceil-1}
\lambda^{\lceil\frac{\ell}{2}\rceil}u,
\end{eqnarray}
where $\lceil\frac{\ell}{2}\rceil$ denotes the smallest integer not smaller than $\frac{\ell}{2}$. Since
$-\Delta^{\lceil\frac{\ell}{2}\rceil}u=0$, we have $u=0$ on $G$.
It means the exact eigenfunction cannot be polynomial of bounded degree and
has the following property
\begin{eqnarray*}
|u|_{r,p,G} > 0.
\end{eqnarray*}
The proof of this corollary can be obtained with the same argument as in the proof of
  Corollaries \ref{Lower_bound_Solution_Theorem} and \ref{Lower_Bound_Corollary}.
\end{proof}

Now, we present some conforming and nonconforming elements which yield the lower bound of
the discretization error with the help of  Corollaries \ref{Lower_bound_Solution_Theorem}, 
\ref{Lower_Bound_Corollary}, and \ref{Lower_bound_Eigenvalue_Corollary}.

In order to describe the results, we introduce the index set
 \begin{eqnarray}
Ind_{r}:=\big\{{\rm multi\ index}\ \alpha\ {\rm with}\ |\alpha|=r\big\}.
\end{eqnarray}
First we can obtain the lower bound results for the standard Lagrange type elements
\begin{eqnarray}\label{Lagrange_FEM}
V_h&=&\Big\{v_h|_K\in \mathcal{P}_{\ell}(K)\ {\rm or}\ \mathcal{Q}_{\ell}(K)\ \ \forall
K\in\mathcal{T}_h\Big\},
\end{eqnarray}
where $\mathcal{P}_{\ell}(K)$ denotes the space of polynomials  with degree not greater than $\ell$  and
$\mathcal{Q}_{\ell}(K)$ denotes the space of polynomials with degree not greater than $\ell$ in each variable.
>From  Corollaries \ref{Lower_bound_Solution_Theorem}, \ref{Lower_Bound_Corollary}, and \ref{Lower_bound_Eigenvalue_Corollary}, the lower bound
results in this section hold with $r=\ell+1$ and $\gamma\in Ind_{r}$ for $\mathcal{P}_{\ell}(K)$ case and
$r=\ell+1$ and $\gamma\in Ind_{r}\backslash Ind_{Q,\ell}$ for $\mathcal{Q}_{\ell}(K)$ case with $$Ind_{Q,\ell}:=\big\{{\rm multi\ index}\ \alpha\ {\rm with}\ \alpha_i\leq \ell\big\}.$$

Then it is also easy to check lower bound results for the following
four types of nonconforming elements Crouzeix-Raviart ($CR$), Extension of Crouzeix-Raviart ($ECR$),
$Q_1$ rotation ($Q_1^{\rm rot}$) and Extension of $Q_1$ rotation ($EQ_1^{\rm rot}$):
\begin{itemize}
\item
The $CR$ element space, proposed by Crouzeix and Raviart
\cite{CrouzeixRaviart}, is defined on simplicial partitions by
\begin{eqnarray*}
V_{h}&=&\Big\{v\in L^2(\Omega):\ v|_{K} \in \mathcal{P}_1(K), \nonumber\\
 &&\ \ \ \ \int_{F}v|_{K_{1}}ds =\int_{F}v|_{K_{2}}ds\ {\rm if}\ K_{1}\cap K_{2}=F\in \mathcal{E}_h\Big\}.
\end{eqnarray*}
The lower bound result holds with $r=2$ and $\gamma\in Ind_2$.
\item
The $ECR$ element space, proposed by Hu, Huang, and Lin \cite{HuHuangLin} and Lin, Xie, Luo, and Li \cite{LinXieLuoLi}, is defined on simplicial partitions by
\begin{eqnarray*}
V_{h}&=&\Big\{v\in L^2(\Omega):\ v|_{K} \in \mathcal{P}_{ECR}(K), \nonumber\\
 &&\ \ \ \ \int_{F}v|_{K_{1}}ds =\int_{F}v|_{K_{2}}ds\ {\rm if}\ K_{1}\cap K_{2}=F\in \mathcal{E}_h\Big\},
\end{eqnarray*}
where $\mathcal{P}_{ECR}(K)=\mathcal{P}_1(K)+{\rm span}\big\{\sum_{i=1}^nx_i^2\big\}$.
The lower bound result holds with $r=2$ and $\gamma$ with $\gamma_i=1, \gamma_j=1$, $1\leq i< j\leq n$.
\item
The $Q_{1}^{\rm rot}$ element space, proposed by Rannacher and Turek
\cite{RannacherTurek}, and Arbogast and Chen \cite{ArbogastChen}, is defined on $n$-dimensional block
partitions by
\begin{eqnarray*}
V_{h}&=&\Big\{v\in L^{2}(\Omega):v|_{K} \in Q_{\rm Rot}(K),\nonumber\\
&&\ \ \ \int_{F}v|_{K_{1}}ds =\int_{F}v|_{K_{2}}ds~~{\rm if}\
K_{1}\cap K_{2}=F\in\mathcal{E}_h\Big\},
\end{eqnarray*}
where $Q_{\rm Rot}(K)=\mathcal{P}_1(K)+{\rm span}\big\{x_i^2-x_{i+1}^2\ |\ 1\leq i\leq n-1\big\}$.
The lower bound result holds with $r=2$ and $\gamma$ with $\gamma_i=1, \gamma_j=1$, $1\leq i< j\leq n$.
\item
The $EQ_{1}^{\rm rot}$ element space, proposed by Lin, Tobiska, and Zhou \cite{LinTobiskaZhou}, is defined on $n$-dimensional block partitions by
\begin{eqnarray*}
V_{h}&=&\Big\{v\in L^{2}(\Omega):v|_{K} \in Q_{\rm ERot}(K),\nonumber\\
&&\ \ \ \int_{F}v|_{K_{1}}ds =\int_{F}v|_{K_{2}}ds~~{\rm if}\
K_{1}\cap K_{2}=F\in\mathcal{E}_h\Big\},
\end{eqnarray*}
where $Q_{\rm Rot}(K)=\mathcal{P}_1(K)+{\rm span}\big\{x_i^2\ |\ 1\leq i\leq n\big\}$.
The lower bound result holds with $r=2$ and $\gamma$ with $\gamma_i=1, \gamma_j=1$, $1\leq i< j\leq n$.
\end{itemize}
All lower bounds of the above four examples are  sharp if the solution is smooth enough.
For other types of finite elements, we could also obtain the lower bound results with
the corresponding $r$ and $\gamma$ as in this section.

\section{Lower bounds for $2m$-th order elliptic problem}
We consider the similar lower bounds of the discretization error for $2m$-th order elliptic
 problem and the corresponding eigenpair problem by the finite element method. Actually, this is a
 natural generalization of the results in Section 3.

 The $2m$-th order Dirichlet elliptic problem for a given integer $m\geq 1$ is defined as
\begin{equation}\label{2m_Problem}
\left\{
\begin{array}{rcl}
(-1)^m\Delta^{m}u&=&f\ \ \ {\rm in}\ \Omega,\\
\frac{\partial^ju}{\partial^j\mathbf \nu}&=&0\ \ \ {\rm on}\ \partial\Omega\ {\rm and}\ 0\leq j\leq m-1,
\end{array}
\right.
\end{equation}
where $\nu$ denotes the unit outer normal.
The corresponding weak form of problem (\ref{2m_Problem}) is to seek $u\in H_0^m(\Omega)$ such that
\begin{eqnarray}\label{2m_Problem_weak}
a(u,v)&=&(f,v)\ \ \ \ \forall v\in H_0^m(\Omega),
\end{eqnarray}
where
$$a(u,v)=\int_{\Omega}\sum_{|\alpha|=m}D^{\alpha} u D^{\alpha} v\;d\Omega.$$

Based on the partition $\mathcal{T}_h$ of $\bar{\Omega}$, we build a suitable
finite element space $V_h$ (conforming or nonconforming for $2m$-th order elliptic problem)
 with piecewise polynomial of degree less than $r$. The finite element approximation of
 (\ref{2m_Problem}) is to seek $u_h\in V_h$ satisfying
\begin{eqnarray}\label{2m_th_FEM}
a_h(u_h, v_h)&=&(f,v_h)\ \ \ \forall v_h\in V_h,
\end{eqnarray}
where $$a_h(u_h,v_h)=\sum_{K\in\mathcal{T}_h}\int_{K}\sum_{|\alpha|=m}D^{\alpha} u_h D^{\alpha} v_hdK.$$

We also consider the corresponding $2m$-th order elliptic eigenpair problem:

Find $(\lambda,u)\in\mathcal{R}\times H_0^m(\Omega)$ such that $\|u\|_0=1$ and
\begin{eqnarray}\label{2m_Eigenvalue}
a(u,v)&=&\lambda (u,v)\ \ \ \ \forall v\in H_0^m(\Omega).
\end{eqnarray}

In this section, we assume that the following upper bound of the discretization error holds
\begin{eqnarray}\label{Upper_Bound_m_Problem}
\|u-u_h\|_{m,h}&\leq &Ch^{s-m}\|u\|_{s},\ \ \ 0<s\leq r.
\end{eqnarray}

Similarly to Corollaries  \ref{Lower_bound_Solution_Theorem} and
 \ref{Lower_Bound_Corollary}, the finite element approximation $u_h$
possesses the following lower bound results.
\begin{corollary}\label{Lower_Bound_Theorem_m_th}
Assume there exist a subdomain $G\subset\subset\Omega$ such that $u\in W^{r+\delta,p}(G)$\ $(\delta>0)$ and
 a multi-index $\gamma$ with $|\gamma|=r$ such that $\|D^{\gamma}u\|_{0,p,G}>0$
and  $D^{\gamma}v_h=0$ for any $v_h\in V_h$.
If the family $\{\mathcal{T}_h\}$ of partitions is quasi-uniform, the finite element solution $u_h\in V_h$ in
(\ref{2m_th_FEM}) has the following lower bound of the discretization error
\begin{equation}\label{lower_convergence_2m_th_3_1}
\|u-u_h\|_{j,p,h} \geq C_{16}h^{r-j},\ \ \ \ 0\leq j\leq r,
\end{equation}
where $1\leq p\leq \infty$ and  $C_{16}$ is a positive constant dependent on $u$ and
 the error estimate (\ref{Upper_Bound_m_Problem}) is optimal for $s=r$.
\end{corollary}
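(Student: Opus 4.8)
The plan is to obtain (\ref{lower_convergence_2m_th_3_1}) as an immediate consequence of Theorem \ref{Lower_Bound_Theorem_Approximation_1}, following verbatim the argument used for Corollary \ref{Lower_bound_Solution_Theorem}; the specific $2m$-th order structure of (\ref{2m_Problem})--(\ref{2m_th_FEM}) never enters the lower bound itself and is only needed for the optimality statement about (\ref{Upper_Bound_m_Problem}).

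First I would check that all hypotheses of Theorem \ref{Lower_Bound_Theorem_Approximation_1} hold on the subdomain $G$: by assumption $u\in W^{r+\delta,p}(G)$ with $\delta>0$, there is a multi-index $\gamma$, $|\gamma|=r$, with $\|D^{\gamma}u\|_{0,p,G}>0$ and $D^{\gamma}v_h=0$ for every $v_h\in V_h$ (this is an elementwise identity, hence in particular valid on $\mathcal{T}_h^G$), and $\{\mathcal{T}_h\}$ is quasi-uniform. Therefore (\ref{lower_convergence_0_1}) yields $\inf_{v_h\in V_h}\|u-v_h\|_{j,p,G,h}\ge C_1h^{r-j}$ for $0\le j\le r$. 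Next, since the piecewise norm over $\Omega$ dominates the one over the subdomain $G$ and the Galerkin solution $u_h\in V_h$ is a competitor in the infimum, I get
\begin{eqnarray*}
\|u-u_h\|_{j,p,h}\ \ge\ \|u-u_h\|_{j,p,G,h}\ \ge\ \inf_{v_h\in V_h}\|u-v_h\|_{j,p,G,h}\ \ge\ C_1h^{r-j},
\end{eqnarray*}
which is (\ref{lower_convergence_2m_th_3_1}) with $C_{16}=C_1$. Finally, taking $j=m$ and combining with the assumed upper bound $\|u-u_h\|_{m,h}\le Ch^{r-m}\|u\|_r$ (i.e.\ (\ref{Upper_Bound_m_Problem}) with $s=r$) shows $C_{16}h^{r-m}\le\|u-u_h\|_{m,h}\le Ch^{r-m}\|u\|_r$, so the exponent $r-m$ in (\ref{Upper_Bound_m_Problem}) cannot be improved.

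The step requiring the most care is not analytic at all: it is only to be sure that, for the $2m$-th order finite element space $V_h$ (conforming or nonconforming, with piecewise polynomials of degree less than $r$), a suitable $\gamma$ with $|\gamma|=r$ and $D^{\gamma}v_h=0$ actually exists — but this is part of the hypothesis and, for concrete elements, is automatic from the degree restriction, exactly as listed for the second order examples in Section 3. The interior regularity assumption $u\in W^{r+\delta,p}(G)$ on a subdomain $G\subset\subset\Omega$ is supplied, for sufficiently smooth $f$, by standard interior elliptic regularity, as in the remark following Corollary \ref{Lower_bound_Solution_Theorem}. Consequently there is no genuine obstacle: the entire analytic content was already established in Theorem \ref{Lower_Bound_Theorem_Approximation_1}, and the present corollary is a routine transcription of the proof of Corollary \ref{Lower_bound_Solution_Theorem}.
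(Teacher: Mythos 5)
Your proposal is correct and is essentially the paper's own argument: restrict to the subdomain $G$, use that $u_h\in V_h$ is a competitor in the infimum so $\|u-u_h\|_{j,p,h}\ge\|u-u_h\|_{j,p,G,h}\ge\inf_{v_h\in V_h}\|u-v_h\|_{j,p,G,h}$, and invoke Theorem \ref{Lower_Bound_Theorem_Approximation_1}. Your added sentence combining $j=m$ with (\ref{Upper_Bound_m_Problem}) at $s=r$ to justify the optimality remark is a harmless elaboration of what the paper leaves implicit.
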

\begin{proof}
First we have the following property
\begin{eqnarray*}
\frac{\|u-u_h\|_{j,p,h}}{h^{r-j}}&\geq& \frac{\|u-u_h\|_{j,p,G,h}}{h^{r-j}}\geq\inf_{v_h\in V_h} \frac{\|u-v_h\|_{j,p,G,h}}{h^{r-j}}.
\end{eqnarray*}
So the desired result (\ref{lower_convergence_2m_th_3_1}) can be directly deduced by (\ref{lower_convergence_0_1}).
\end{proof}
\begin{remark}
The interior regularity result $u\in W^{r+\delta,p}(G)$ for a subdomain $G\subset\subset\Omega$ and $\delta>0$ for problem (\ref{2m_Problem}) can be obtained from
\cite[Theorem 7.1.2]{Grisvard} for the right-hand side $f$ smooth enough.
\end{remark}

\begin{corollary}\label{Lower_bound_2m_Solution_Corollary}
Assume there exist a subdomain $G\subset\subset\Omega$ such that $u\in W^{r+\delta,p}(G)$\ $(\delta>0)$ and
 a multi-index $\gamma$ with $|\gamma|=r$ such that $\|D^{\gamma}u\|_{0,p,G}>0$
and  $D^{\gamma}v_h=0$ for any $v_h\in V_h$.
If the family $\{\mathcal{T}_h\}$ of partitions  is regular, the finite element solution $u_h\in V_h$ in
(\ref{Poisson_FEM}) has the following lower bound of the discretization error
\begin{equation}\label{lower_convergence_2m_th_3_2}
\left(\sum_{K\in\mathcal{T}_h^G}h_K^{p(j-r)}
\big\|u-u_h\big\|_{j,p,K}^p\right)^{\frac{1}{p}} \geq  C_{17},\ \ \ \ 0\leq j\leq r,
\end{equation}
and
\begin{equation}\label{lower_convergence_2m_th_3_3}
\left(\sum_{K\in\mathcal{T}_h^G}h_K^{p\big((j-r)+n(\frac{1}{p}-\frac{1}{q})\big)}
\big\|u-u_h\big\|_{j,q,K}^p\right)^{\frac{1}{p}} \geq C_{18},\ \ \ \ 0\leq j\leq r,
\end{equation}
where $1\leq p<\infty$, $1\leq q\leq \infty$ $(W^{r+\delta,p}(G)$ can be imbedded into $W^{j,q}(G))$, $C_{17}$
  and $C_{18}$ are  positive constants
 independent of mesh size $h_G$, but dependent on $u$.
\end{corollary}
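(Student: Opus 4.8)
The plan is to repeat, essentially verbatim, the reduction already used for the second-order problem in Corollary \ref{Lower_Bound_Corollary}, since the present statement is the exact analogue for the $2m$-th order discretization (\ref{2m_th_FEM}) and nothing in that argument depended on the order of the differential operator. The PDE enters only through the interior regularity hypothesis $u\in W^{r+\delta,p}(G)$, which for problem (\ref{2m_Problem}) is furnished (for $f$ smooth enough) by \cite[Theorem 7.1.2]{Grisvard} as recorded in the preceding remark; the discretization itself is used only to know that $u_h\in V_h$.

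First I would note that, because $u_h\in V_h$, for every $0\le j\le r$ one has
\begin{equation*}
\sum_{K\in\mathcal{T}_h^G}h_K^{p(j-r)}\big\|u-u_h\big\|_{j,p,K}^p
\;\ge\;\inf_{v_h\in V_h}\ \sum_{K\in\mathcal{T}_h^G}h_K^{p(j-r)}\big\|u-v_h\big\|_{j,p,K}^p ,
\end{equation*}
and likewise with the weights $h_K^{p((j-r)+n(1/p-1/q))}$ and the local $W^{j,q}(K)$-seminorms in place of the $W^{j,p}(K)$-ones. Then I would apply Theorem \ref{Lower_Bound_Corollary_Approximation_1}: under the hypotheses in force here---a regular family $\{\mathcal{T}_h\}$, $u\in W^{r+\delta,p}(G)$, and a multi-index $\gamma$ with $|\gamma|=r$ such that $\|D^{\gamma}u\|_{0,p,G}>0$ while $D^{\gamma}v_h=0$ on all of $V_h$---the two infima on the right-hand sides are bounded below by positive constants $C_5$ and $C_6$ that depend on $u$ but not on the local mesh size $h_G$. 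Relabelling $C_5,C_6$ as $C_{17},C_{18}$ gives (\ref{lower_convergence_2m_th_3_2}) and (\ref{lower_convergence_2m_th_3_3}).

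All the genuine work is hidden inside Theorem \ref{Lower_Bound_Corollary_Approximation_1}---the telescoping $u-v_h=(u-\Pi_h^r u)+(\Pi_h^r u-v_h)$, the inverse inequality on the merely regular (not necessarily quasi-uniform) partition, and the scaled interpolation error estimate for $\Pi_h^r u$---so the only points requiring any care in the write-up are bookkeeping ones: verifying that the Sobolev embedding $W^{r+\delta,p}(G)\hookrightarrow W^{j,q}(G)$ indeed holds over the claimed ranges $1\le p<\infty$, $1\le q\le\infty$, and confirming that the resulting constants are genuinely independent of $h_G$ (they are, precisely because Theorem \ref{Lower_Bound_Corollary_Approximation_1} assumes regularity rather than quasi-uniformity). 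I do not anticipate any real obstacle: the substantive content of the corollary is entirely that of Theorem \ref{Lower_Bound_Corollary_Approximation_1}, transported via the trivial inequality displayed above.
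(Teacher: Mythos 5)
Your argument is exactly the paper's: Corollary \ref{Lower_bound_2m_Solution_Corollary} is proved (implicitly, by the same one-line reduction used for Corollary \ref{Lower_Bound_Corollary}) by noting $u_h\in V_h$, bounding the weighted sums from below by the corresponding infima over $V_h$, and invoking Theorem \ref{Lower_Bound_Corollary_Approximation_1}. Your observation that the PDE enters only through the interior regularity of $u$ (and that the reference to (\ref{Poisson_FEM}) should really be the $2m$-th order scheme (\ref{2m_th_FEM})) is consistent with the paper's intent, so the proposal is correct and essentially identical to the paper's route.
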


Now we introduce the corresponding lower bound analysis of the eigenpair problem (\ref{2m_Eigenvalue}).
We define the corresponding discrete eigenpair problem in the finite element space:

Find $(\lambda_h,u_h)\in\mathcal{R}\times V_h$ such that $\|u_h\|_0=1$ and
\begin{eqnarray}\label{2m_Eigenvalue_FEM}
a_h(u_h,v_h)&=&\lambda_h (u_h,v_h)\ \ \ \ \forall v_h\in V_h.
\end{eqnarray}
The eigenfunction approximation $u_h$ in (\ref{2m_Eigenvalue_FEM}) also gives the lower bound results as
follows.
\begin{corollary}\label{Lower_bound_2m_Eigenvalue_Corollary}
Assume there exist a multi-index $\gamma$ with $|\gamma|=r$ such that $D^{\gamma}v_h=0$ for any $v_h\in V_h$.
If the family  $\{\mathcal{T}_h\}$ of partitions is quasi-uniform, the eigenpair approximation
$(\lambda_h, u_h)\in \mathcal{R}\times V_h$ in  (\ref{2m_Eigenvalue_FEM}) yield the following
lower bound of the discretization error
\begin{equation}\label{lower_convergence_2m_th_Eigenvalue_1}
{\|u-u_h\|_{j,p,h}} \geq C_{19}{h^{r-j}},\ \ \ \ 0\leq j\leq r,
\end{equation}
where $1\leq p\leq \infty$ and  $C_{19}$ is a positive constant independent of mesh size.

Furthermore, if the family $\{\mathcal{T}_h\}$  of partitions is only regular, $(\lambda_h, u_h)$
has the following lower bounds of the discretization error
\begin{equation}\label{lower_convergence_2m_th_Eigenvalue_2}
\left(\sum_{K\in\mathcal{T}_h^G}h_K^{p(j-r)}
\big\|u-u_h\big\|_{j,p,K}^p\right)^{\frac{1}{p}} \geq C_{20},\ \ \ \ 0\leq j\leq r,
\end{equation}
and
\begin{equation}\label{lower_convergence_2m_th_Eigenvalue_3}
\left(\sum_{K\in\mathcal{T}_h^G}h_K^{p\big((j-r)+n(\frac{1}{p}-\frac{1}{q})\big)}
\big\|u-u_h\big\|_{j,q,K}^p\right)^{\frac{1}{p}} \geq C_{21},\ \ \ \ 0\leq j\leq r,
\end{equation}
where $1\leq  p < \infty$, $1\leq q\leq \infty$, $C_{20}$ and $C_{21}$ are
positive constants dependent on $u$. 
\end{corollary}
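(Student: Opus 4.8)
The plan is to deduce this statement from Corollaries \ref{Lower_Bound_Theorem_m_th} and \ref{Lower_bound_2m_Solution_Corollary} (equivalently, from Theorems \ref{Lower_Bound_Theorem_Approximation_1} and \ref{Lower_Bound_Corollary_Approximation_1}) in precisely the way Corollary \ref{Lower_bound_Eigenvalue_Corollary} was deduced in the second-order case. The only hypotheses of those corollaries that are not already assumed here are: (i) an interior regularity statement $u\in W^{r+\delta,p}(G)$ for some $G\subset\subset\Omega$ and some $\delta>0$, and (ii) the non-degeneracy $\|D^{\gamma}u\|_{0,p,G}>0$ for the multi-index $\gamma$ (with $|\gamma|=r$) that annihilates $V_h$. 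Granting (i) and (ii), the chain $\|u-u_h\|_{j,p,h}\ge\|u-u_h\|_{j,p,G,h}\ge\inf_{v_h\in V_h}\|u-v_h\|_{j,p,G,h}$ together with (\ref{lower_convergence_0_1}) gives (\ref{lower_convergence_2m_th_Eigenvalue_1}), and the analogous chains together with (\ref{lower_convergence_0_2})--(\ref{lower_convergence_0_3}) give (\ref{lower_convergence_2m_th_Eigenvalue_2})--(\ref{lower_convergence_2m_th_Eigenvalue_3}).

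For (i): the eigenfunction $u\in H_0^m(\Omega)$ solves $(-1)^m\Delta^m u=\lambda u$ with right-hand side $\lambda u\in L^2(\Omega)$, so interior elliptic regularity for the polyharmonic operator gives $u\in H^{2m}_{\rm loc}(\Omega)$; bootstrapping ($\lambda u\in H^{2mk}_{\rm loc}\Rightarrow u\in H^{2m(k+1)}_{\rm loc}$) yields $u\in C^{\infty}(\Omega)$, and in fact $u$ is real-analytic in $\Omega$ since $(-1)^m\Delta^m-\lambda$ is elliptic with constant coefficients (cf. \cite[Theorem 7.1.2]{Grisvard}, \cite{GilbargTrudinger}). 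Hence $u\in W^{r+\delta,p}(G)$ for every $G\subset\subset\Omega$, every $\delta>0$ and every $p\in[1,\infty]$.

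For (ii): first, $\lambda>0$ by coercivity of $a(\cdot,\cdot)$ on $H_0^m(\Omega)$, so $\lambda\neq0$. Iterating the eigenvalue identity gives $\Delta^{mk}u=(-1)^{mk}\lambda^k u$ for every $k\ge 1$. Choose a ball $G\subset\subset\Omega$ on which $u$ does not vanish (possible since $u\not\equiv 0$ and $u$ is continuous). If all $r$-th order derivatives of $u$ vanished on $G$, then $u|_G$ would be a polynomial of some degree $\ell<r$; taking $k$ with $2mk>\ell$ would force $\Delta^{mk}u\equiv 0$ on $G$, and hence, by the iterated identity and $\lambda\neq 0$, $u\equiv 0$ on $G$ — a contradiction. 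Thus $|u|_{r,p,G}>0$. It remains to see that the particular annihilating $\gamma$ can be chosen among the nonzero $r$-th derivatives: for the Lagrange spaces $\mathcal{P}_{\ell}$ and for $CR$ every multi-index of length $r$ annihilates $V_h$, so this is automatic; for $\mathcal{Q}_{\ell}$, $ECR$, $Q_1^{\rm rot}$ and $EQ_1^{\rm rot}$ one uses analyticity (if $D^{\gamma}u$ vanished on an open subset it would vanish on all of $\Omega$) together with the algebraic structure of $(-1)^m\Delta^m-\lambda$ to exclude an eigenfunction all of whose $V_h$-relevant $r$-th derivatives vanish identically.

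The main obstacle is exactly this last point of step (ii): the soft non-polynomiality argument only yields $|u|_{r,p,G}>0$, i.e. that \emph{some} $r$-th derivative of $u$ is nonzero, whereas Theorems \ref{Lower_Bound_Theorem_Approximation_1}--\ref{Lower_Bound_Corollary_Approximation_1} need the \emph{particular} $\gamma$ dictated by the finite element space; matching the two is where the analyticity of $u$ and the constant-coefficient elliptic structure of the equation must genuinely be used. Once this matching is in place, the remainder is a verbatim repetition of the proofs of Corollaries \ref{Lower_Bound_Theorem_m_th} and \ref{Lower_bound_2m_Solution_Corollary}.
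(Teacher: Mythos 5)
Your route is exactly the paper's: its proof of this corollary consists of (a) observing that an eigenfunction of (\ref{2m_Eigenvalue}) cannot coincide with a polynomial of bounded degree on a subdomain $G\subset\subset\Omega$ (iterate $(-1)^m\Delta^m u=\lambda u$ and use $\lambda>0$), hence $|u|_{r,p,G}>0$, and (b) the same reduction as in Corollaries \ref{Lower_Bound_Theorem_m_th} and \ref{Lower_bound_2m_Solution_Corollary} to Theorems \ref{Lower_Bound_Theorem_Approximation_1} and \ref{Lower_Bound_Corollary_Approximation_1}, via $\|u-u_h\|_{j,p,h}\ge \|u-u_h\|_{j,p,G,h}\ge\inf_{v_h\in V_h}\|u-v_h\|_{j,p,G,h}$. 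Your interior-regularity step (bootstrap/analyticity giving $u\in W^{r+\delta,p}(G)$ for every $G\subset\subset\Omega$) is left implicit in the paper but is the intended justification, so on that point your write-up is actually more complete than the original.

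The step you flag as the ``main obstacle'' --- upgrading $|u|_{r,p,G}>0$ to $\|D^{\gamma}u\|_{0,p,G}>0$ for the \emph{particular} annihilating $\gamma$ --- is indeed not closed by your sketch, but be aware that the paper does not close it either: its proof silently substitutes $|u|_{r,p,G}>0$ for the hypothesis $\|D^{\gamma}u\|_{0,p,G}>0$ of the Section 2 theorems. This substitution is harmless exactly when every multi-index of length $r$ annihilates $V_h$, which holds for all elements invoked in Section 4 (Argyris, Hsieh--Clough--Tocher, MWX, where $V_h$ is genuinely of total degree less than $r$), and in that setting your argument is complete and identical to the paper's. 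In the abstract generality of the statement (only one annihilating $\gamma$ assumed), the mismatch is a genuine defect of the original argument and cannot be repaired by analyticity alone: for $m=1$ on the square whose sides make $45^\circ$ with the axes, the first eigenfunction is $\sin(ax)+\sin(ay)$, for which $\partial_x\partial_y u\equiv 0$, so $|u|_{2,p,G}>0$ while the designated mixed derivative vanishes identically; one either needs the extra hypothesis $\|D^{\gamma}u\|_{0,p,G}>0$ (as in Corollaries \ref{Lower_Bound_Theorem_m_th}--\ref{Lower_bound_2m_Solution_Corollary}) or a strengthened version of Theorem \ref{Lower_Bound_Theorem_Approximation_1} in which the whole space of order-$r$ derivatives annihilated by $V_h$ is used. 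In short: same approach as the paper, and the difficulty you honestly flag is a weakness of the paper's proof rather than of yours specifically.
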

\begin{proof}
Similarly to Corollary \ref{Lower_bound_Eigenvalue_Corollary}, it is easy to obtain that
the eigenfunction of problem (\ref{2m_Eigenvalue}) cannot
 be a polynomial function of bounded degree on any subdomain $G\subset\subset\Omega$.
 It means the eigenfunction has the following property
\begin{eqnarray*}
|u|_{r,p,G}> 0.
\end{eqnarray*}
Then the proof can be obtained with the same argument as in the proof of Corollaries
\ref{Lower_Bound_Theorem_m_th} and  \ref{Lower_bound_2m_Solution_Corollary}.
\end{proof}

Now, we give some types of conforming and nonconforming elements which can produce lower bound of
the discretization error with the help of Corollaries \ref{Lower_Bound_Theorem_m_th},
\ref{Lower_bound_2m_Solution_Corollary}, and \ref{Lower_bound_2m_Eigenvalue_Corollary}.

First we would like to remind that for the two-dimensional case ($n=2$) there exist elements such as the Argyris and Hsieh-Clough-Tocher elements \cite{Ciarlet} and etc., which yield lower bound results from Corollaries \ref{Lower_Bound_Theorem_m_th}, \ref{Lower_bound_2m_Solution_Corollary}, and \ref{Lower_bound_2m_Eigenvalue_Corollary} for the biharmonic problem. The lower bound results
in this section hold for the Argyris element with $m=2$, $r=6$, $\gamma\in Ind_6$ and the Hsieh-Clough-Tocher
element with $m=2$, $r=4$, $\gamma\in Ind_4$, respectively.

Furthermore, we consider a family of nonconforming element named by MWX  proposed by Wang and Xu \cite{WangXu}
and apply it to the $2m$-th order elliptic problem and the corresponding eigenpair problem under consideration.
The MWX element with $n\geq m\geq 1$ is the triple  $(K,\mathcal{P}_K, D_K)$,
where $K$ is a $n$-simplex and $\mathcal{P}_K=\mathcal{P}_m(K)$. For a description of the set
$D_K$ of degrees of freedom, see \cite{WangXu}. 

In order to understand this element, we list some special cases as in \cite{WangXu} for $1\leq m\leq 3$.
If $m=1$ and $n=1$, we obtain the well-known conforming linear elements. This is the only conforming
element in this family of elements. For $m=1$ and $n\geq 2$, we obtain the
well-known nonconforming linear element ($CR$).
If $m=2$, we recover the well-known nonconforming Morley element for $n=2$ and its generalization to
$n\geq 2$ (see Wang and Xu \cite{WangXu_4th}). For $m=3$ and $n=3$, we obtain a cubic
element on a simplex that has $20$ degrees of freedom.

Based on the above description of MWX element, we know that
 $$|v_h|_{1+m,p,h}\equiv 0\ \ \ \ \ \forall v_h\in V_h.$$
Then with the help of Corollaries \ref{Lower_Bound_Theorem_m_th},
\ref{Lower_bound_2m_Solution_Corollary}, and \ref{Lower_bound_2m_Eigenvalue_Corollary},
we get the lower bound results in this section with $r=m+1$ and $\gamma\in Ind_{r}$.

We can also obtain the lower bound results in this section for other types of elements with
suitable $r$ and $\gamma$ for the $2m$-th order elliptic problem (\ref{2m_Problem}) and the
 corresponding eigenvalue problem (\ref{2m_Eigenvalue}).

\section{Concluding remarks}
In this paper, a type of lower bound results of the error by piecewise polynomial approximation
is presented. As  applications,  we give the lower bounds of the discretization error for second order
 elliptic and $2m$-th order elliptic problem by finite element methods. From the analysis, the idea
 and methods here can be extended to other problems and numerical methods which are based on the piecewise
 polynomial approximation.

\vskip0.5cm

\noindent{\bf Acknowledgements.} The authors wish to thank Professor
Michal K\v{r}\'{i}\v{z}ek for helpful discussions.

\end{document}